\def\bp{\pmb{p}}
\def\bu{\pmb{u}}
\def\bbC{\mathbb{C}}
\def\bbR{\mathbb{R}}
\def\scrG{\mathscr{G}}
\def\cR{{\cal R}}
\def\cX{{\cal X}}
\def\cY{{\cal Y}}
\def\wtd{\widetilde}
\def\what{\widehat}
\newcommand\STM[2]{{\rm St}(#1,#2)}
\DeclareMathOperator{\diag}{diag}
\DeclareMathOperator{\dist}{dist}
\DeclareMathOperator{\eig}{eig}
\DeclareMathOperator{\rank}{rank}
\DeclareMathOperator{\tr}{tr}
\DeclareMathOperator{\F}{F}
\DeclareMathOperator{\HH}{H}
\DeclareMathOperator{\T}{T}
\newtheorem{theorem}{Theorem}[section]
\newtheorem{lemma}{Lemma}[section]
\newtheorem{corollary}{Corollary}[section]
\theoremstyle{definition}
\numberwithin{equation}{section}
\title{Approximations of Extremal Eigenspace and Orthonormal Polar Factor}
\author{Ren-Cang Li%
\thanks{Department of Mathematics, University of Texas at Arlington, Arlington, TX 76019-0408, USA.
        Email: {\tt rcli@uta.edu}.}
}
\date{October 10, 2025
}
\begin{document}

\maketitle

\begin{abstract}

This paper is concerned with two extremal  problems from matrix analysis. One is about approximating the top eigenspaces of a Hermitian matrix and the other one about approximating the orthonormal polar factor of a general matrix.
Tight error bounds on the quality of the approximations are obtained.

\bigskip
\noindent
{\bf Keywords:} eigenspace; Hermitian matrix; orthonormal polar factor; error bound

\smallskip
\noindent
{\bf Mathematics Subject Classification}  15A18, 15A42, 5B57, 65F15.
\end{abstract}

%
\section{Introduction}\label{sec:intro}
In this paper, we are concerned with two matrix extremal problems related to approximate the top eigenspaces of a Hermitian
matrix and the orthonormal polar factor of a matrix. Both problems will be stated in terms of certain matrix trace approximations.
The first problem  is well-known in matrix analysis while the second one isn't that much and only came up recently in \cite{li:2023arXiv}.
They played key roles  on optimization on Stiefel manifolds \cite{li:2023arXiv}, where the results
on the problems are used for the convergence analysis of two newly established frameworks for  numerical solutions
of most common optimization problems on Stiefel manifolds arising from various data science tasks.

Specifically, given a Hermitian matrix $H\in\bbC^{n\times n}$ and an orthonormal matrix $P\in\bbC^{n\times k}$,
our first problem is to bound the error in the column subspace of $P$ as an approximate eigenspace associated with
the $k$ largest eigenvalues of $H$, in terms of the difference between trace $\tr(P^{\HH}HP)$ and the sum of the $k$ largest eigenvalues.
There have been some results on this problem in the literature already \cite[Theorem~1]{kove:1991},
\cite[Chapter~3]{wein:1974}, \cite[Theorem~2.2]{li:2004c}. Our second problem is about the error analysis of
approximating the orthonormal polar factor of a matrix $B\in\bbC^{n\times k}$ via approximating its
trace norm, also known as the nuclear norm, by $\tr(P^{\HH}B)$ where $P\in\bbC^{n\times k}$ is orthonormal.
This second problem seems to appear for the first time in \cite{li:2023arXiv}.
Both problems are of interest in their own right.

The rest of this paper is organized as follows. In \cref{sec:angle-space}, we introduce the canonical angles between
two subspaces of equal dimension. \Cref{sec:eigenspace} is about approximating the extremal eigenspaces of
a Hermitian matrix while \cref{sec:singularspace} is about approximating the orthonormal polar factor of a tall matrix
by the means of approximating its nuclear norm. Finally, in \cref{sec:conclu} we draw our conclusions.

{\bf Notation.}
We follow conventional notation in matrix analysis.
The set $\bbR^{m\times n}$  denotes  $m\times n$ real matrices,
and $\bbR^n=\bbR^{n\times 1}$ and $\bbR=\bbR^1$.
Similarly, $\bbC^{m\times n}$,  $\bbC^n$, and $\bbC$ denote the
corresponding sets in the complex number field.
The identity matrix of size $n$ is denoted by $I_n$,
or simply $I$ if its size is clear from the context.
For any matrix or vector $B$,
$B^{\T}$ and $B^{\HH}$ stand for the transpose and  complex conjugate
transpose of $B$, respectively, and $\cR(B)$ denotes the column subspace, spanned by the columns of $B$.
The (complex) Stiefel manifold $\STM{k}{n}$ consists
of all $P\in\bbC^{n\times k}$ with orthonormal columns:
\begin{equation}\label{eq:stm} 
\STM{k}{n}=\{P\in\bbC^{n\times
k}\,:\,P^{\HH}P=I_k\}\subset\bbC^{n\times k}.
\end{equation}
A square matrix $A\succ 0$ (resp., $\succeq 0$) means that it is Hermitian and
positive definite (resp., semidefinite); accordingly $A\prec 0\, (\preceq
0)$ if $-A\succ 0\, (\succeq 0)$.
%
%
We use $\|B\|_2$ to denote the matrix 2-norm (i.e., the largest singular value of $B$)
and $\|B\|_{\F}:=\sqrt{\tr(B^{\HH} B)}$ the Frobenius norm.
Other notation will be explained at their first appearances.

\section{Canonical angles between subspaces}\label{sec:angle-space}
To serve the rest of this paper, we will introduce the canonical angles between two subspaces of equal dimension.

Let
$\cX=\cR(X)$ and $\cY=\cR(Y)$ be two $k$-dimensional subspaces of $\bbC^n$,
where $X,\,Y\in\STM{k}{n}$. The canonical angles
$\theta_1(\cX,\cY)\ge\cdots\ge\theta_k(\cX,\cY)$ between $\cX$ and $\cY$ are defined as \cite{stsu:1990}
$$
	0\le\theta_i\equiv\theta_i(\cX,\cY):=\arccos \sigma_i(X^{\HH}Y)\le\frac {\pi}2 \quad\mbox{for $1\le i\le k$},
$$
and accordingly,
the diagonal matrix of the canonical angles between $\cX$ and $\cY$ is
	$$
	\Theta(\cX,\cY)=\diag(\theta_1,\dots,\theta_k)\in\bbC^{k\times k}.
	$$
It is known that
	\begin{align}
	\dist_2(\cX,\cY)&:=\|\sin\Theta(\cX,\cY)\|_2=\sin\theta_1, \label{eq:sinTheta} \\
    \dist_{\F}(\cX,\cY)&:=\|\sin\Theta(\cX,\cY)\|_{\F}=\Big[\sum_{i=1}^k\sin^2\theta_i\Big]^{1/2} \label{eq:sinTheta:F}
	\end{align}
are two unitarily invariant metrics on the
Grassmann manifold
$\scrG_k(\bbC^n)$  \cite[p.99]{sun:2001}, the collection of all $k$-dimensional subspaces of $\bbC^n$ equipped with
the metric of either $\dist_2(\cdot,\cdot)$ or $\dist_{\F}(\cdot,\cdot)$.

\section{Extremal Eigenspace}\label{sec:eigenspace}
In this section, we will investigate the approximation of the top eigensapces of a Hermitian matrix.
We will establish one new result, i.e., the first inequality in \eqref{eq:eig2max} below.
For that purpose,
we will denote by
$\eig(H)=\{\lambda_i(H)\}_{i=1}^n$  the set of the
        eigenvalues (counted by multiplicities) of Hermitian matrix $H\in\bbC^{n\times n}$,
        arranged in the decreasing order:
        $$
        \lambda_1(H)\ge\lambda_2(H)\ge\cdots\ge\lambda_n(H).
        $$
The real version of \Cref{lm:maxtrace-proj} below is \cite[Lemma B.10]{li:2023arXiv}, and the only difference between them is the replacements of all matrix transposes in \cite[Lemma B.10]{li:2023arXiv} by the matrix complex conjugate transposes.

\begin{theorem}\label{lm:maxtrace-proj}
Let $H\in\bbC^{n\times n}$ be Hermitian and $P_*\in\STM{k}{n}$ whose column space $\cR(P_*)$ is the invariant subspace
of $H$ associated with its $k$ largest eigenvalues. Suppose that $\lambda_k(H)-\lambda_{k+1}(H)>0$.
Given $P\in\STM{k}{n}$, let
$$
\eta=\tr(P_*^{\HH}HP_*)-\tr(P^{\HH}HP), \quad \epsilon=\sqrt{\frac {\eta}{\lambda_k(H)-\lambda_{k+1}(H)}}.
$$
Then\footnote {The first inequality in \eqref{eq:eig2max} actually holds so long as
      $\cR(P_*)$ is a $k$-dimensional invariant subspace of $H$, as its proof will show. It is the second inequality
      that needs the condition of $\cR(P_*)$ being associated with the $k$ largest eigenvalues of $H$.}
\begin{equation}\label{eq:eig2max}
\frac {\|HP-P(P^{\HH}HP)\|_{\F}}{\lambda_1(H)-\lambda_n(H)}\le\|\sin\Theta(\cR(P),\cR(P_*))\|_{\F}\le {\epsilon}.
\end{equation}
\end{theorem}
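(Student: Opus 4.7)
The plan is to diagonalize $H = Q\Lambda Q^{\HH}$ with $\Lambda = \diag(\lambda_1,\ldots,\lambda_n)$ and with the first $k$ columns of $Q$ spanning $\cR(P_*)$, and to work in these spectral coordinates. Writing $\tilde P := Q^{\HH} P = \begin{pmatrix}P_1\\P_2\end{pmatrix}$ with $P_1 \in \bbC^{k\times k}$ and $P_2 \in \bbC^{(n-k)\times k}$, the definition $\sigma_i(P_*^{\HH}P) = \cos\theta_i$ yields $\sigma_i(P_1) = \cos\theta_i$, and then $P_1^{\HH}P_1 + P_2^{\HH}P_2 = I_k$ forces $\sigma_i(P_2) = \sin\theta_i$, so that $\|\sin\Theta(\cR(P),\cR(P_*))\|_{\F}^2 = \|P_2\|_{\F}^2$.

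For the upper bound (second inequality) I will introduce $w_i := \|q_i^{\HH}P\|_2^2$, the diagonal entries of $\tilde P\tilde P^{\HH}$; they satisfy $w_i \in [0,1]$ and $\sum_i w_i = k$. A direct expansion gives $\tr(P^{\HH}HP) = \sum_i \lambda_i w_i$ and $\tr(P_*^{\HH}HP_*) = \sum_{i=1}^k \lambda_i$, whence
\[
\eta = \sum_{i=1}^k \lambda_i(1 - w_i) - \sum_{i=k+1}^n \lambda_i w_i.
\]
Setting $s := \sum_{i>k} w_i$, the identity $\sum_{i\le k}(1 - w_i) = s$ together with $s = \|P_2\|_{\F}^2 = \|\sin\Theta\|_{\F}^2$ combines with $\lambda_i \ge \lambda_k$ for $i\le k$ and $\lambda_i \le \lambda_{k+1}$ for $i > k$ to give $\eta \ge (\lambda_k - \lambda_{k+1})\,s$, i.e.\ $\|\sin\Theta\|_{\F} \le \epsilon$.

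For the lower bound (first inequality), set $\Pi := PP^{\HH}$ and $\Pi_* := P_*P_*^{\HH}$, and observe that $R := HP - P(P^{\HH}HP) = (I - \Pi) HP$ is unchanged when $H$ is replaced by $H - \mu I$, because $(I - \Pi)P = 0$. I therefore shift so that $\|H\|_2 \le L/2$ with $L := \lambda_1(H) - \lambda_n(H)$. Since $\cR(P_*)$ is invariant under $H$, we have $H\Pi_* = \Pi_* H$, so
\[
R = (I - \Pi)\,\Pi_*\,HP + (I - \Pi)\,H\,(I - \Pi_*)P.
\]
The first summand I bound by $\|(I-\Pi)\Pi_*\|_{\F}\,\|HP\|_2$ and the second by $\|H\|_2\,\|(I-\Pi_*)P\|_{\F}$. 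Since $\|HP\|_2 \le \|H\|_2 \le L/2$ and both $\|(I-\Pi)\Pi_*\|_{\F}$ and $\|(I-\Pi_*)P\|_{\F}$ equal $\|\sin\Theta\|_{\F}$, adding yields $\|R\|_{\F} \le L\,\|\sin\Theta\|_{\F}$, which is the claim after dividing through by $L$.

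The one mildly nontrivial identity is $\|(I-\Pi)\Pi_*\|_{\F} = \|\sin\Theta\|_{\F}$, which I will verify from $\|(I-\Pi)\Pi_*\|_{\F}^2 = \tr((I-\Pi)\Pi_*) = k - \|P_*^{\HH}P\|_{\F}^2 = \sum_i\sin^2\theta_i$; the companion identity $\|(I-\Pi_*)P\|_{\F} = \|\sin\Theta\|_{\F}$ is analogous. Everything else is routine bookkeeping. Consistent with the footnote, the lower bound uses only the commutativity $H\Pi_* = \Pi_* H$, i.e.\ invariance of $\cR(P_*)$, whereas the upper bound genuinely needs the top-$k$ property of $P_*$ to apply the $\lambda_k,\lambda_{k+1}$ separation.
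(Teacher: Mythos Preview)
Your proof is correct. For the first inequality your argument is essentially the paper's: both shift $H$ by $(\lambda_1+\lambda_n)/2$ so that $\|H\|_2=L/2$, use the invariance of $\cR(P_*)$ to split the residual into two pieces, and bound each by $(L/2)\|\sin\Theta\|_{\F}$; you phrase this with projectors $\Pi,\Pi_*$ and the commutation $H\Pi_*=\Pi_*H$, whereas the paper expands to $[P,P_\bot]$ and $[P_*,P_{*\bot}]$ and uses the block-diagonal form of $H$---the content is the same. For the second inequality the paper simply cites Kova\v{c}-Striko--Veseli\'c (and related sources) rather than proving it, so your self-contained diagonalization argument with the weights $w_i$ actually supplies more detail than the paper does.
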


\begin{proof}
We know that $\eta\ge 0$ by Fan's trace minimization principle \cite{fan:1949}.
The second inequality in \eqref{eq:eig2max} is \cite[Theorem~1]{kove:1991} and can also be derived from some of the estimates in
\cite[chapter~3]{wein:1974} and by a minor modification to the proof of \cite[Theorem~2.2]{li:2004c}. It remains to show
the first inequality in \eqref{eq:eig2max}.
Expand $P$ to $[P,P_{\bot}]\in\STM{n}{n}$. We find
$$
[P,P_{\bot}]^{\HH}\big(HP-P(P^{\HH}HP)\big)=\begin{bmatrix}
                                    0 \\
                                    P_{\bot}^{\HH}HP
                                  \end{bmatrix},
$$
implying
\begin{align}
\|HP-P(P^{\HH}HP)\|_{\F}&=\left\|[P,P_{\bot}]^{\HH}\big(HP-P(P^{\HH}HP)\big)\right\|_{\F} \nonumber \\
   &=\|P_{\bot}^{\HH}HP\|_{\F} \nonumber\\
   &=\|P_{\bot}^{\HH}(H-\xi I)P\|_{\F}, \label{eq:eig2max:pf-1}
\end{align}
for any $\xi\in\bbC$ because $P_{\bot}^{\HH}P=0$. But in what follows, we will take $\xi=[\lambda_1(H)+\lambda_n(H)]/2$.
Next we expand $P_*$ to $[P_*,P_{*\bot}]\in\STM{n}{n}$. Since the column space of $P_*$ is the invariant subspace
of $H$ associated with the $k$ largest eigenvalues of $H$, we have
$$
H[P_*,P_{*\bot}]=[P_*,P_{*\bot}]\begin{bmatrix}
                                  P_*^{\HH}HP_* &  \\
                                   & P_{*\bot}^{\HH}HP_{*\bot}
                                \end{bmatrix},
$$
and
\begin{align}
P_{\bot}^{\HH}(H-\xi I)P&=P_{\bot}^{\HH}[P_*,P_{*\bot}]\begin{bmatrix}
                                  P_*^{\HH}(H-\xi I)P_* &  \\
                                   & P_{*\bot}^{\HH}(H-\xi I)P_{*\bot}
                                \end{bmatrix}\begin{bmatrix}
                                               P_*^{\HH}\\ P_{*\bot}^{\HH}
                                             \end{bmatrix}P \nonumber\\
   &=P_{\bot}^{\HH}P_*P_*^{\HH}(H-\xi I)P_*P_*^{\HH}P
     +P_{\bot}^{\HH}P_{*\bot}P_{*\bot}^{\HH}(H-\xi I)P_{*\bot}P_{*\bot}^{\HH}P.  \label{eq:eig2max:pf-2}
\end{align}
Noticing that
\begin{gather*}
\|P_{\bot}^{\HH}P_*\|_{\F}=\|P_{*\bot}^{\HH}P\|_{\F}=\|\sin\Theta(\cR(P),\cR(P_*))\|_{\F}, \\
\|P_*^{\HH}P\|_2\le 1, \qquad\|P_{\bot}^{\HH}P_{*\bot}\|_2\le 1,
\end{gather*}
and, for $\xi=[\lambda_1(H)+\lambda_n(H)]/2$,
\begin{align*}
\|P_*^{\HH}(H-\xi I)P_*\|_2&\le\|H-\xi I\|_2=\frac 12 [\lambda_1(H)-\lambda_n(H)], \\
\|P_{*\bot}^{\HH}(H-\xi I)P_{*\bot}\|_2&\le\|H-\xi I\|_2=\frac 12 [\lambda_1(H)-\lambda_n(H)],
\end{align*}
we get from \eqref{eq:eig2max:pf-2},
\begin{align*}
\|P_{\bot}^{\HH}(H-\xi I)P\|_{\F}&\le 2\|H-\xi I\|_2\|\sin\Theta(\cR(P),\cR(P_*))\|_{\F} \\
    &=[\lambda_1(H)-\lambda_n(H)]\|\sin\Theta(\cR(P),\cR(P_*))\|_{\F},
\end{align*}
which together with \eqref{eq:eig2max:pf-1} yield the first inequality in \eqref{eq:eig2max}, as expected.
\end{proof}

%

\section{Orthonormal Polar Factor}\label{sec:singularspace}
In this section, we will bound the error in an approximation to
\begin{equation}\label{eq:approx-tr-norm}
\arg\max_{P\in\STM{k}{n}}\Re(\tr(P^{\HH}B)),
\end{equation}
which will be shown to be an orthonormal polar factor of $B$,
where $B\in\bbC^{n\times k}$ and $k\le n$, and $\Re(\cdot)$ extracts the real part of a complex number.
For notation, we will denote the singular values of $B$ by $\sigma_i(B)$ for $1\le i\le k$ and arranged in the descending order, i.e.,
$$
\sigma_1(B)\ge\sigma_1(B)\ge\cdots\sigma_k(B),
$$
the smallest singular value $\sigma_{\min}(B)=\sigma_k(B)$, and the trace norm $\|B\|_{\tr}=\sum_{i=1}^k\sigma_i(B)$, also known as the nuclear norm.
The following von Neumann's trace inequality is needed later in our proofs.

\begin{lemma}[von Neumann's trace inequality \cite{neum:1937}, {\cite[p.183]{hojo:1991}}]\label{lm:vN-tr-ineq}
For $B,\,C\in\bbC^{n\times k}$, we have
$$
|\tr(B^{\HH}C)|\le\sum_{i=1}^k\sigma_i(B)\,\sigma_i(C).
$$
\end{lemma}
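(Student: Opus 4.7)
The plan is to reduce to the diagonal case via the singular value decomposition and then invoke a majorization argument built on the Birkhoff--von Neumann theorem together with the rearrangement inequality. First, I would pad $B$ and $C$ with zero columns so that both become $n\times n$, which preserves $\tr(B^{\HH}C)$ and all nonzero singular values, and I would fix full SVDs $B=U_B\Sigma_B V_B^{\HH}$ and $C=U_C\Sigma_C V_C^{\HH}$ with $\Sigma_B=\diag(\sigma_1(B),\dots,\sigma_n(B))$ and analogously for $\Sigma_C$. Cyclic invariance of the trace then gives $\tr(B^{\HH}C)=\tr(\Sigma_B X\Sigma_C Y)$, where $X=U_B^{\HH}U_C$ and $Y=V_C^{\HH}V_B$ are unitary.

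Next I would expand the trace entrywise to obtain
$$
|\tr(B^{\HH}C)|\le\sum_{i,j}\sigma_i(B)\,\sigma_j(C)\,|X_{ij}|\,|Y_{ji}|\le\tfrac{1}{2}\sum_{i,j}\sigma_i(B)\,\sigma_j(C)\,\bigl(|X_{ij}|^2+|Y_{ji}|^2\bigr),
$$
where the second step is just $2ab\le a^2+b^2$. The key structural observation is that $D=(|X_{ij}|^2)$ and $E=(|Y_{ji}|^2)$ are doubly stochastic, since $X$ and $Y$ are unitary, so each of the two sums has the form $\sum_{i,j}\sigma_i(B)\,\sigma_j(C)\,M_{ij}$ with $M$ doubly stochastic.

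The last step is the following majorization lemma: for any doubly stochastic $M\in\bbR^{n\times n}$ and any non-increasing nonnegative sequences $a_1\ge\cdots\ge a_n$ and $b_1\ge\cdots\ge b_n$, one has $\sum_{i,j}a_i b_j M_{ij}\le\sum_i a_i b_i$. This follows by writing $M$ as a convex combination of permutation matrices via Birkhoff--von Neumann, reducing the bound to inequalities of the form $\sum_i a_i b_{\pi(i)}\le\sum_i a_i b_i$ for each permutation $\pi$, which is the rearrangement inequality applied to two sequences sorted in the same order. Applying this lemma to both $D$ and $E$ collapses each double sum to $\sum_i\sigma_i(B)\sigma_i(C)$, and the factor $\tfrac{1}{2}$ disappears when the two identical bounds are averaged, yielding the desired inequality.

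I expect the main obstacle to be packaging the majorization step cleanly: the decomposition into a convex combination of permutations and the rearrangement inequality are both classical, but the argument must be stated in enough generality to accommodate both $(|X_{ij}|^2)$ and $(|Y_{ji}|^2)$ in one stroke. An alternative route would avoid AM--GM altogether by invoking Ky Fan's maximum principle, or Horn's log-majorization $\prod_{i\le m}\sigma_i(BC^{\HH})\le\prod_{i\le m}\sigma_i(B)\sigma_i(C)$ combined with Weyl's inequalities, but the doubly-stochastic route above is self-contained and elementary.
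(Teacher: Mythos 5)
Your proof is correct. Note, however, that the paper does not prove this lemma at all: it is stated as a classical fact and attributed to von Neumann (1937) and to Horn and Johnson, \emph{Topics in Matrix Analysis}, p.~183, so there is no ``paper's proof'' to compare against. What you have reconstructed is one of the standard textbook arguments. The reduction to diagonal form via two full SVDs, the identity $\tr(B^{\HH}C)=\tr(\Sigma_B X\Sigma_C Y)$ with unitary $X,Y$, the AM--GM step $|X_{ij}||Y_{ji}|\le\tfrac12(|X_{ij}|^2+|Y_{ji}|^2)$, the observation that $(|X_{ij}|^2)$ and $(|Y_{ji}|^2)$ are doubly stochastic, and the Birkhoff--von Neumann plus rearrangement argument are all sound. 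The only cosmetic points worth tightening if you were to write this out in full: the zero-column padding requires $k\le n$ (which is the regime the paper works in anyway), and you should state explicitly that after padding the index sets in the two doubly stochastic sums both range over $1,\dots,n$ so that a single majorization lemma handles both $D$ and $E$. None of this is a gap in the argument; it is a complete and self-contained proof of a result the paper simply cites.
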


The polar decomposition of $B\in\bbC^{n\times k}$
refers to $B=P\Lambda$ where $P\in\STM{k}{n}$ and $0\preceq\Lambda\in\bbC^{k\times k}$. It is known
that $\Lambda$ is unique and in fact $\Lambda=(B^{\HH}B)^{1/2}$ (the positive semidefinite square root of $B^{\HH}B$) but the orthonormal polar factor $P$ is unique if and only if $\rank(B)=k$
\cite{high:2008,li:1993b,li:1995,li:1997}, in which case  $P=B(B^{\HH}B)^{-1/2}$.
The polar decomposition can be  computed by the thin SVD: $B=U\Sigma V^{\HH}$, as
$P=UV^{\HH}$ and $\Lambda=V\Sigma V^{\HH}$,
where $U\in\STM{k}{n}$, $V\in\STM{k}{k}$, and
$\Sigma=\diag(\sigma_1(B),\ldots,\sigma_k(B))\in\bbR^{k\times k}$ is diagonal with the singular values  on the diagonal.

The next lemma are likely well-known. For example, its results are implied in the discussion in \cite{wazl:2022a} before
\cite[Lemma~3.2]{wazl:2022a} for the real number case (see also \cite[Lemma~B.11]{li:2023arXiv}, \cite{luli:2024}).
We provide a proof here for self-containedness and also because it is stated for the complex number field.

\begin{lemma}\label{lm:polar2max}
Let $B\in\bbC^{n\times k}$ and $k\le n$.
\begin{enumerate}[{\rm (a)}]
  \item $\Re(\tr(P^{\HH}B))\le\|B\|_{\tr}$ for any $P\in\STM{k}{n}$;
  \item $\Re(\tr(P^{\HH}B))=\|B\|_{\tr}$ where $P\in\STM{k}{n}$ if and only if $B=P\Lambda$ with $\Lambda\succeq 0$, a polar decomposition;
  \item We have
        $$
        \max_{P\in\STM{k}{n}}\Re(\tr(P^{\HH}B))=\|B\|_{\tr}
        $$
        and the optimal value $\|B\|_{\tr}$ is achieved by $P=P_*$, an orthonormal polar factor of $B$.
\end{enumerate}
\end{lemma}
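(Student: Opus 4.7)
My plan is to treat (a), (b), (c) in order, reducing everything to the thin SVD of $B$ and \cref{lm:vN-tr-ineq}.

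For (a), every $P\in\STM{k}{n}$ has $\sigma_i(P)=1$ for $1\le i\le k$, so \cref{lm:vN-tr-ineq} applied with $C=B$ gives
$$
|\tr(P^{\HH}B)|\le\sum_{i=1}^k\sigma_i(P)\sigma_i(B)=\|B\|_{\tr},
$$
and then $\Re(\tr(P^{\HH}B))\le|\tr(P^{\HH}B)|$ yields (a). For the ``if'' half of (b), I would observe that $B=P\Lambda$ with $\Lambda\succeq 0$ forces $B^{\HH}B=\Lambda^{2}$, so the eigenvalues of $\Lambda$ (which are real and nonnegative) coincide with the singular values of $B$, giving $\tr(P^{\HH}B)=\tr(\Lambda)=\|B\|_{\tr}$. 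The ``only if'' half is where I expect the real work to lie. I plan to take the thin SVD $B=U\Sigma V^{\HH}$ and introduce $Q=U^{\HH}PV\in\bbC^{k\times k}$, which satisfies $\|Q\|_2\le 1$ because $U$, $V$, and $P$ each have unit spectral norm. Cyclically permuting,
$$
\tr(P^{\HH}B)=\tr(Q^{\HH}\Sigma)=\sum_{i=1}^k\sigma_i(B)\,\overline{Q_{ii}},
$$
so $\Re(\tr(P^{\HH}B))=\|B\|_{\tr}$ combined with $|Q_{ii}|\le\|Q\|_2\le 1$ forces $Q_{ii}=1$ whenever $\sigma_i(B)>0$.

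The main obstacle is turning these scalar identities on the diagonal of $Q$ into the matrix equation $B=P\Lambda$. My plan is to note that $Q_{ii}=1$ together with $\|Qe_i\|_2\le 1$ forces $Qe_i=e_i$ (since $\|Qe_i\|_2^2\ge|Q_{ii}|^2=1$ already saturates the bound, forcing all other coordinates of $Qe_i$ to vanish), and symmetrically $e_i^{\HH}Q=e_i^{\HH}$. Partitioning indices by whether $\sigma_i(B)$ is positive, this shows $Q=\diag(I_r,Q_{22})$ with $r=\rank(B)$ and $\|Q_{22}\|_2\le 1$; since $\Sigma$ vanishes outside its leading $r\times r$ block, $Q^{\HH}\Sigma=\Sigma$. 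Consequently
$$
P^{\HH}B=VQ^{\HH}\Sigma V^{\HH}=V\Sigma V^{\HH}\succeq 0.
$$
A parallel Pythagorean-norm computation, using $\|PVe_i\|_2=1=\|UQe_i\|_2$ for $i\le r$, shows $PVe_i=Ue_i$, hence $PV\Sigma=U\Sigma$, which in turn gives $PP^{\HH}B=PV\Sigma V^{\HH}=U\Sigma V^{\HH}=B$. Setting $\Lambda=V\Sigma V^{\HH}$ then exhibits the polar decomposition $B=P\Lambda$ required by (b).

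Finally, (c) is immediate: (a) caps the maximum at $\|B\|_{\tr}$, and taking $P_{*}=UV^{\HH}$ from the thin SVD defines an orthonormal polar factor for which $P_{*}^{\HH}B=V\Sigma V^{\HH}\succeq 0$, so the ``if'' half of (b) attains the cap.
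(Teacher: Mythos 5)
Your proof is correct, and the overall strategy matches the paper's: take the thin SVD $B=U\Sigma V^{\HH}$, use von Neumann's trace inequality for part (a), and reduce the equality case in part (b) to a diagonal condition (on the $k\times k$ contraction $Q=U^{\HH}PV$ in your notation) for indices up to $r=\rank(B)$. Where you diverge is in upgrading that diagonal condition to the polar factorization. The paper sets $\what P=PV$ with columns $\what\bp_i$ and writes $U=[\bu_1,\ldots,\bu_k]$; from $\Re(\what\bp_i^{\HH}\bu_i)=1$ and unit norms it invokes the Cauchy-Schwarz equality case to conclude $\what\bp_i=\bu_i$ directly for $i\le r$, and then reads off $B=P\Lambda$. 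You instead exploit $\|Q\|_2\le 1$ and a column/row Pythagorean saturation argument to first determine the block structure $Q=\diag(I_r,Q_{22})$ (which incidentally makes $P^{\HH}B=V\Sigma V^{\HH}\succeq 0$ explicit), and then run a second Pythagorean argument to get $PVe_i=Ue_i$ for $i\le r$, after which $P\Lambda=PV\Sigma V^{\HH}=U\Sigma V^{\HH}=B$. Your route is a bit longer but equally rigorous, and the intermediate fact $P^{\HH}B\succeq 0$ falls out cleanly; the paper's column-wise Cauchy-Schwarz shortcut is more direct. Parts (a) and (c) are handled in essentially the same way in both.
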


\begin{proof}
Item (a) is a consequence of \Cref{lm:vN-tr-ineq}:
$$
\Re(\tr(P^{\HH}B))\le|\tr(P^{\HH}B)|\le\sum_{i=1}^k\sigma_i(P)\sigma_i(B)=\sum_{i=1}^k\sigma_i(B)=\|B\|_{\tr},
$$
where we have used $\sigma_i(P)=1$ for $1\le i\le k$.

For item (b), if $B=P\Lambda$ with $\Lambda\succeq 0$,
then $\tr(P^{\HH}B)=\tr(\Lambda)=\|B\|_{\tr}$. On the other hand, if $\Re(\tr(P^{\HH}B))=\|B\|_{\tr}$, then
immediately $0\le\tr(P^{\HH}B)\in\bbR$. Suppose $\rank(B)=r\le k$ and let the thin SVD of $B$ be
$$
B=U\Sigma V^{\HH}=U_1\Sigma_1V_1^{\HH},
$$
where $U_1\in\STM{r}{n}$, $V_1\in\STM{r}{k}$, and $\Sigma_1=\diag(\sigma_1(B),\ldots,\sigma_r(B))$.
Write
$$
\what P:=PV\equiv[\what \bp_1,\ldots,\what\bp_k]\in\STM{k}{n}, \quad
U\equiv [\bu_1,\ldots,\bu_k],
$$
where $\what \bp_i$ and $\bu_i$ are unit column vectors.
We have
$$
\Re(\tr(P^{\HH}B))=\Re(\tr(P^{\HH}U\Sigma V^{\HH}))
    =\Re(\tr([V^{\HH}P^{\HH}U]\Sigma ))
    =\sum_{i=1}^r\Re(\what\bp_i^{\HH}\bu_i)\sigma_i(B),
$$
yielding $\sum_{i=1}^r[1-\Re(\what\bp_i^{\HH}\bu_i)]\sigma_i(B)=0$ by $\Re(\tr(P^{\HH}B))=\|B\|_{\tr}=\sum_{i=1}^r\sigma_i(B)$. Hence
$$
\Re(\what\bp_i^{\HH}\bu_i)=1\quad\mbox{for $1\le i\le r$}
$$
because $1-\Re(\what\bp_i^{\HH}\bu_i)\ge 0$ and $\sigma_i(B)>0$ for  $1\le i\le r$. By the condition under which the Cauchy-Schwarz inequality
becomes an equality:
$$
1=\Re(\what\bp_i^{\HH}\bu_i)\le\|\what\bp_i\|_2\|\bu_i\|_2=1,
$$
we conclude that $\what\bp_i=\bu_i$ for  $1\le i\le r$. We can then write $\what P=[U_1,\what P_2]$ where
$\what P_2\in\bbC^{n\times (k-r)}$. Finally, we get
$$
P=\what PV^{\HH}=U_1V_1^{\HH}+\what P_2V_2^{\HH}\in\STM{k}{n}
$$
for which $P(V_1\Sigma_1 V_1^{\HH})=U_1\Sigma_1V_1^{\HH}=B$, a polar decomposition of $B$, as was to be shown.

Item (c) is a consequence of item (a) and item (b).
\end{proof}

Lemma~\ref{lm:polar2max} says that $\Re(\tr(P^{\HH}B))$  is bounded  above by $\|B\|_{\tr}$ always and the upper bound
$\|B\|_{\tr}$ is achieved
by any orthonormal polar factor $P_*$ of $B$ and also any maximizer of $\Re(\tr(P^{\HH}B))$ over $P\in\STM{k}{n}$ is an orthonormal polar factor of $B$.
Conceivably,
the closer $\Re(\tr(P^{\HH}B))$ is to the upper bound, the closer $P$ approaches to an orthonormal polar factor of $B$.
The results of the next lemma quantify the last statement.

\begin{theorem}\label{thm:polar2max}
Let $B\in\bbC^{n\times k}$ and suppose $\rank(B)=k$. Let $P_*$ be the unique
orthonormal polar factor of $B$.  Given $P\in\STM{k}{n}$, let
\begin{equation}\label{eq:eta-epsilon}
\eta=\|B\|_{\tr}-\Re(\tr(P^{\HH}B)), \quad
\epsilon=\sqrt{\frac {2\eta}{\sigma_{\min}(B)}}.
\end{equation}
\begin{enumerate}[{\rm (a)}]
  \item We have\footnote {The proof of the second inequality in \eqref{eq:polar2max} is actually through proving
            $\|\sin\frac 12\Theta(\cR(P),\cR(P_*))\|_{\F}\le \frac 12{\epsilon}$, which is stronger.
            Also since $\cR(P_*)$ is the same as $\cR(B)$ here, it can be replaced with $\cR(B)$.}
        \begin{equation}\label{eq:polar2max}
        \frac {\|B-P(P^{\HH}B)\|_{\F}}{\|B\|_2}\le\|\sin\Theta(\cR(P),\cR(P_*))\|_{\F}\le {\epsilon}\,;
        \end{equation}
  \item If  $P^{\HH}B\succ 0$, then
        \begin{equation} \label{eq:polar2max-2}
         \|P-P_*\|_{\F}\le\left(1+\frac {2\|B\|_2}{\sigma_{\min}(B)+\sigma_{\min}(P^{\HH}B)}\right)
                             {\epsilon}\,;
        \end{equation}
  \item  If  $\cR(P)=\cR(P_*)$, in which case $\sin\Theta(\cR(P),\cR(P_*))=0$, then
        \begin{equation} \label{eq:polar2max-3}
         \|P-P_*\|_{\F}\le {\epsilon}\,.
        \end{equation}
\end{enumerate}
\end{theorem}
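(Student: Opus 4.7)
My plan is to treat the three parts separately, exploiting the polar factorization $B = P_*\Lambda$ with $\Lambda = (B^{\HH}B)^{1/2} \succ 0$ and writing $\Theta \equiv \Theta(\cR(P),\cR(P_*))$ throughout.

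For part (a), the first inequality follows by substituting $B = P_*\Lambda$: extending $P$ to a unitary $[P,P_\perp]$ gives $B - P(P^{\HH}B) = (I - PP^{\HH})P_*\Lambda = P_\perp P_\perp^{\HH}P_*\Lambda$, and submultiplicativity yields $\|B - P(P^{\HH}B)\|_{\F} \le \|P_\perp^{\HH}P_*\|_{\F}\|\Lambda\|_2 = \|\sin\Theta\|_{\F}\|B\|_2$. For the second inequality, I would instead prove the stronger statement $\|\sin(\Theta/2)\|_{\F} \le \epsilon/2$, which implies $\|\sin\Theta\|_{\F} \le \epsilon$ via $\sin\theta \le 2\sin(\theta/2)$ on $[0,\pi/2]$. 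Applying \Cref{lm:vN-tr-ineq} to $P^{\HH}B = P^{\HH}P_*\Lambda$ (whose factors have singular values $\cos\theta_i$ and $\sigma_i(B)$) gives $\Re\tr(P^{\HH}B) \le \sum_i\cos\theta_i\,\sigma_i(B)$; subtracting from $\|B\|_{\tr} = \sum_i\sigma_i(B)$ and bounding $\sigma_i(B) \ge \sigma_{\min}(B)$ in the resulting nonnegative combination converts $\eta$ into $\ge 2\sigma_{\min}(B)\|\sin(\Theta/2)\|_{\F}^2$.

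For part (b), the strategy is to introduce the polar decomposition $P^{\HH}P_* = U_0 H_0$ (valid with $H_0 \succ 0$ because $P^{\HH}B \succ 0$ forces $P^{\HH}P_* = (P^{\HH}B)\Lambda^{-1}$ to be invertible) and split via the triangle inequality
\[
\|P - P_*\|_{\F} \le \|P - PU_0\|_{\F} + \|PU_0 - P_*\|_{\F}.
\]
A direct expansion using $P_*^{\HH}PU_0 = H_0$ shows that the second summand equals $2\|\sin(\Theta/2)\|_{\F} \le \epsilon$, supplying the ``$1\cdot\epsilon$'' contribution. For the first summand, $\|P - PU_0\|_{\F} = \|I - U_0\|_{\F}$, and I would invoke the polar-decomposition perturbation bound $\|V_1 - V_2\|_{\F} \le \frac{2}{\sigma_{\min}(A_1) + \sigma_{\min}(A_2)}\|A_1 - A_2\|_{\F}$ (cf.\ \cite{li:1995}) applied to the $k\times k$ pair $A_1 = P^{\HH}B$ (polar factor $I$, being Hermitian positive definite) and $A_2 = U_0\Lambda$ (polar factor $U_0$, since $\Lambda \succ 0$). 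Using the factorization $A_1 - A_2 = U_0(H_0 - I)\Lambda$ and the bound $\|H_0 - I\|_{\F} \le 2\|\sin(\Theta/2)\|_{\F} \le \epsilon$ (which uses $(1-\cos\theta_i)^2 \le 4\sin^2(\theta_i/2)$) gives $\|I - U_0\|_{\F} \le \frac{2\|B\|_2\epsilon}{\sigma_{\min}(P^{\HH}B) + \sigma_{\min}(B)}$, and summing the two bounds matches the claim.

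For part (c), $\cR(P) = \cR(P_*)$ gives $P = P_*U$ for some unitary $U \in \bbC^{k\times k}$, hence $\|P - P_*\|_{\F}^2 = 2k - 2\Re\tr U$. After spectrally decomposing $\Lambda = Q\diag(\sigma_i(B))Q^{\HH}$ and setting $\wtd U = Q^{\HH}UQ$ (so that $\tr\wtd U = \tr U$), one finds $\eta = \sum_i\sigma_i(B)(1 - \Re\wtd U_{ii})$; since $|\wtd U_{ii}| \le 1$ and $\sigma_i(B) \ge \sigma_{\min}(B)$, this yields $\eta \ge \sigma_{\min}(B)(k - \Re\tr U)$, from which $\|P - P_*\|_{\F}^2 \le 2\eta/\sigma_{\min}(B) = \epsilon^2$ is immediate. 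The step I expect to be most subtle is the choice of auxiliary matrix $U_0\Lambda$ in part (b): pairing it with $P^{\HH}B$ is not a priori obvious, and the construction succeeds because a single $U_0$ (drawn from the polar decomposition of $P^{\HH}P_*$) simultaneously controls $\|PU_0 - P_*\|_{\F}$ in the triangle inequality and makes $P^{\HH}B - U_0\Lambda$ factor cleanly as $U_0(H_0 - I)\Lambda$.
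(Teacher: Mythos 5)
Your proposal is correct and follows essentially the same route as the paper in all three parts: part (a) via von Neumann's inequality and the $1-\cos\theta=2\sin^2(\theta/2)$ rewriting, part (c) by reducing to the diagonal of the unitary $Q^{\HH}UQ$, and part (b) via a triangle inequality through an intermediate orthonormal matrix combined with the polar-factor perturbation bound of \cite{li:1995}. In part (b) the paper's intermediate matrix $\wtd P=P_*Q^{\HH}$ (with $Q$ supplied by \cite[Lemma~4.1]{zhli:2014b}) coincides with your $PU_0$ up to a right unitary factor, since $Q=U_0$ is precisely the unitary polar factor of $P^{\HH}P_*$; your version merely makes that choice explicit and bounds $\|A_1-A_2\|_{\F}$ via $\|H_0-I\|_{\F}$ rather than via $\|P-\wtd P\|_{\F}$, which is a cosmetic (and marginally tighter) variant of the same estimate.
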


\begin{proof}
Let $\theta_i$ for $1\le i\le k$ be the canonical angles between
subspaces $\cR(P)$ and $\cR(P_*)$ where $\pi/2\ge\theta_1\ge\cdots\ge\theta_k\ge 0$.
Then the singular values of $P^{\HH}P_*\in\bbC^{k\times k}$ are $\cos\theta_i$ for $1\le i\le k$. Let $B=P_*\Lambda$ be the polar decomposition of $B$.
We have
$P^{\HH}B=P^{\HH}P_*\Lambda$ and thus
\begin{equation}\label{eq:polar2max:pf-1}
\|B\|_{\tr}-\eta=\Re(\tr(P^{\HH}B))=\Re(\tr([P^{\HH}P_*]\Lambda))\le\sum_{i=1}^k\sigma_i(B)\cos\theta_{k-i+1}
\end{equation}
by Lemma~\ref{lm:vN-tr-ineq}.
Noticing $\|B\|_{\tr}=\sum_{i=1}^k\sigma_i(B)$, we get from \eqref{eq:polar2max:pf-1}
\begin{align}
\eta\ge\sum_{i=1}^k\sigma_i(B)\big[1-\cos\theta_{k-i+1}\big]&=\sum_{i=1}^k\sigma_i(B)\big[2\sin^2(\theta_{k-i+1}/2)\big]
       \label{eq:polar2max:pf-2}\\
  &\ge\sum_{i=1}^k\sigma_i(B)\cdot\frac 12\sin^2\theta_{k-i+1} \label{eq:polar2max:pf-3}\\
  &\ge\frac 12\sigma_{\min}(B)\|\sin\Theta(\cR(P),\cR(P_*))\|_{\F}^2, \nonumber
\end{align}
yielding the second inequality in \eqref{eq:polar2max}, where we have used
$$
\sin\theta\le 2\sin\frac {\theta}2=\frac {\sin\theta}{\cos(\theta/2)}\le\sqrt 2\sin\theta
\quad\mbox{for}\quad 0\le\theta\le\frac {\pi}2.
$$
Now we expand $P$ to $[P,P_{\bot}]\in\STM{n}{n}$. We find
$$
[P,P_{\bot}]^{\HH}\big(B-P(P^{\HH}B)\big)=\begin{bmatrix}
                                    0 \\
                                    P_{\bot}^{\HH}B
                                  \end{bmatrix},
$$
implying
\begin{equation}\label{eq:polar2max:pf-1'}
\|B-P(P^{\HH}B)\|_{\F}=\left\|[P,P_{\bot}]^{\HH}\big(B-P(P^{\HH}B)\big)\right\|_{\F}=\|P_{\bot}^{\HH}B\|_{\F}.
\end{equation}
It follows from the CS decomposition \cite{stsu:1990} that the singular values of $P_{\bot}^{\HH}P_*\in\bbC^{k\times (n-k)}$ comes from
$\sin\theta_i$ for $1\le i\le k$, with possibly some additional zeroes.
We have
$P_{\bot}^{\HH}B=P_{\bot}^{\HH}P_*\Lambda$ and hence
$$
\|P_{\bot}^{\HH}B\|_{\F}=\|P_{\bot}^{\HH}P_*\Lambda\|_{\F}
    \le\|P_{\bot}^{\HH}P_*\|_{\F}\|B\|_2=\|B\|_2\|\sin\Theta(\cR(P),\cR(P_*))\|_{\F},
$$
which, together \eqref{eq:polar2max:pf-1'}, yield the first inequality in \eqref{eq:polar2max}.

Next, we show \eqref{eq:polar2max-2}. The proof technique is borrowed from \cite[Lemma 4.1]{zhli:2014b} and
\cite[section 3.1]{teli:2025}.
Suppose now that $P^{\HH}B\succ 0$. Following the proof of \cite[Lemma 4.1]{zhli:2014b}, we
can conclude that there exists $Q\in\STM{k}{k}$ such that $\wtd P=P_*Q^{\HH}$ satisfies
\begin{equation}\label{eq:polar2max:pf-6}
\|P_*-\wtd P\|_{\F}^2=\sum_{i=1}^k 4\sin^2\frac {\theta_i}2
    \le\frac {2\eta}{\sigma_{\min}(B)},
\end{equation}
where we have used \eqref{eq:polar2max:pf-3} for the last inequality.
Adopting the argument in \cite[section~3.1]{teli:2025} upon noticing
$P^{\HH}B=I_k\cdot (P^{\HH}B)$ and $\wtd P^{\HH}B=Q\cdot (P_*^{\HH}B)$ are two polar decompositions, we have,
by \cite[Theorem~1]{li:1995},
\begin{align}
\|I_k-Q\|_{\F}&\le \frac 2{\sigma_{\min}(B)+\sigma_{\min}(P^{\HH}B)}\|P^{\HH}B-\wtd P^{\HH}B\|_{\F} \nonumber\\
   &\le \frac {2\|B\|_2}{\sigma_{\min}(B)+\sigma_{\min}(P^{\HH}B)}\|P-\wtd P\|_{\F}\,,  \label{eq:polar2max:pf-7}
\end{align}
and hence
$$
\|P-P_*\|_{\F}\le\|P-\wtd P\|_{\F}+\|\wtd P-P_*\|_{\F} \\
      =\|P-\wtd P\|_{\F}+\|I-Q\|_{\F},
$$
which, together with \eqref{eq:polar2max:pf-6} and \eqref{eq:polar2max:pf-7}, lead to \eqref{eq:polar2max-2}.
We may simply combine the second inequality in \eqref{eq:polar2max} with \cite[Theorem~3.1]{teli:2025}
to obtain a bound on $\|P-P_*\|_{\F}$, but then the resulting bound will be bigger than the right-hand side of \eqref{eq:polar2max-2} by
a factor of $\sqrt 2$.


Consider now item (c) for which $\cR(P)=\cR(P_*)$. Then $P=P_*W$ for some $W\in\STM{k}{k}$.
Recall that $B=P_*\Lambda$ is the polar decomposition of $B$, and hence  $\Lambda\succ 0$ and its eigenvalues are the singular values of $B$. Let $\Lambda=Q\Gamma Q^{\HH}$ be the eigendecomposition of $\Lambda$ where $Q\in\STM{k}{k}$ and
$\Gamma=\diag(\sigma_1(B),\ldots,\sigma_k(B))$.
Write $Q^{\HH}WQ=[w_{ij}]\in\STM{k}{k}$.
We know $|w_{ii}|\le 1$ for $1\le i\le k$.
We still have by \eqref{eq:polar2max:pf-1}
\begin{align*}
\eta&=\tr(\Lambda)-\Re(\tr(W^{\HH}\Lambda)) \\
    &=\tr(Q\Gamma Q^{\HH})-\Re(\tr(W^{\HH}Q\Gamma Q^{\HH})) \\
    &=\tr(\Gamma)-\Re(\tr(Q^{\HH}W^{\HH}Q\Gamma)),
\end{align*}
yielding
\begin{align}
&\eta=\sum_{i=1}^k[1-\Re(w_{ii})]\sigma_i(B)
    \ge\sigma_{\min}(B)\sum_{i=1}^k[1-\Re(w_{ii})], \nonumber \\
&\sum_{i=1}^k[1-\Re(w_{ii})]\le\frac {\eta}{\sigma_{\min}(B)}=\frac 12\epsilon^2. \label{eq:polar2max:pf-8}
\end{align}
We have
$\|P-P_*\|_{\F}^2=\|W-I\|_{\F}^2=\|Q^{\HH}(W-I)Q\|_{\F}^2=\|Q^{\HH}WQ-I\|_{\F}^2$, and thus
\begin{align*}
\|P-P_*\|_{\F}^2 
 &=\sum_{i=1}^k|w_{ii}-1|^2+\sum_{i=1}^k\sum_{j\ne i}|w_{ij}|^2 \\
 &=\sum_{i=1}^k|w_{ii}-1|^2+\sum_{i=1}^k(1-|w_{ii}|^2) \\
 &=2\sum_{i=1}^k[1-\Re(w_{ii})]
 \le\epsilon^2, \qquad(\mbox{by \eqref{eq:polar2max:pf-8}})
\end{align*}
as was to be shown.
\end{proof}


Both $P(P^{\HH}B)=(PP^{\HH})B$ and $\Theta(\cR(P),\cR(P_*))$ are invariant
with respect to substitution $PQ\leftarrow P$ for any $Q\in\STM{k}{k}$. Therefore $\epsilon$ in \eqref{eq:polar2max} can be replaced with the one so that $\Re(\tr([PQ]^{\HH}B))$ is maximized over $Q\in\STM{k}{k}$. Noticing that
\cite{luli:2024,wazl:2023}
$$
\max_{Q\in\STM{k}{k}}\Re(\tr([PQ]^{\HH}B))=\|P^{\HH}B\|_{\tr}
$$
and any orthonormal polar factor $Q$ of $P^{\HH}B$ is a maximizer for which $[PQ]^{\HH}B=Q^{\HH}(P^{\HH}B)\succeq 0$,
we arrive at the following corollary.

\begin{corollary}\label{cor:polar2max}
Let $B\in\bbC^{n\times k}$ and suppose $\rank(B)=k$. Let $P_*$ be the unique
orthonormal polar factor of $B$.
The inequalities in \eqref{eq:polar2max} hold with
\begin{equation}\tag{\ref{eq:eta-epsilon}$'$}
\eta=\|B\|_{\tr}-\|P^{\HH}B\|_{\tr}, \quad
\epsilon=\sqrt{\frac {2\eta}{\sigma_{\min}(B)}},
\end{equation}
and if also $\rank(P^{\HH}B)=k$ then
\begin{equation} \tag{\ref{eq:polar2max-2}$'$}
         \|PQ-P_*\|_{\F}\le\left(1+\frac {2\|B\|_2}{\sigma_{\min}(B)+\sigma_{\min}(P^{\HH}B)}\right)
                             {\epsilon}\,,
\end{equation}
where $Q$ is the orthonormal polar factor of $P^{\HH}B$.
\end{corollary}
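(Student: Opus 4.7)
The plan is to derive \Cref{cor:polar2max} by applying \Cref{thm:polar2max} not to $P$ itself but to the auxiliary matrix $\wtd P := PQ$, where $Q\in\STM{k}{k}$ is an orthonormal polar factor of $P^{\HH}B$. The starting observation is the invariance already pointed out in the paragraph preceding the corollary: because $Q$ is unitary, $\wtd P\wtd P^{\HH}=PP^{\HH}$, so $\wtd P(\wtd P^{\HH}B)=P(P^{\HH}B)$, and $\cR(\wtd P)=\cR(P)$, so $\Theta(\cR(\wtd P),\cR(P_*))=\Theta(\cR(P),\cR(P_*))$. Hence the left-hand side quantities in \eqref{eq:polar2max} are unchanged under the substitution $P\leftarrow \wtd P$, and only the right-hand side $\epsilon$ is affected through $\eta$.

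For the new $\eta$, I would use the identity
$$
\Re(\tr(\wtd P^{\HH}B))=\Re(\tr(Q^{\HH}P^{\HH}B))=\tr(Q^{\HH}P^{\HH}B)=\|P^{\HH}B\|_{\tr},
$$
which follows because $Q^{\HH}(P^{\HH}B)$ is the positive semidefinite Hermitian factor of the polar decomposition of $P^{\HH}B$, and so its trace equals the nuclear norm $\|P^{\HH}B\|_{\tr}$. Consequently the value of $\eta$ associated with $\wtd P$ is exactly $\|B\|_{\tr}-\|P^{\HH}B\|_{\tr}$, which is the formula advertised in \eqref{eq:eta-epsilon}$'$. Applying the two inequalities of \eqref{eq:polar2max} to $\wtd P$ and then transcribing the left-hand side back in terms of $P$ immediately gives the first assertion of the corollary.

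For the bound \eqref{eq:polar2max-2}$'$, I would apply part (b) of \Cref{thm:polar2max} directly to $\wtd P$. The hypothesis to check is $\wtd P^{\HH}B\succ 0$: by construction $\wtd P^{\HH}B=Q^{\HH}(P^{\HH}B)$ is the positive semidefinite polar factor of $P^{\HH}B$, and this is positive definite precisely when $\rank(P^{\HH}B)=k$, which is exactly the hypothesis. Moreover, $\sigma_{\min}(\wtd P^{\HH}B)=\sigma_{\min}(P^{\HH}B)$ since left multiplication by the unitary $Q^{\HH}$ preserves singular values; plugging this into \eqref{eq:polar2max-2} applied to $\wtd P$ produces the prefactor in \eqref{eq:polar2max-2}$'$, while the left-hand side becomes $\|\wtd P-P_*\|_{\F}=\|PQ-P_*\|_{\F}$.

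The proof is essentially a bookkeeping exercise on top of \Cref{thm:polar2max}, and I do not anticipate a serious obstacle. The one place that requires a moment of care is the identification $\tr(Q^{\HH}P^{\HH}B)=\|P^{\HH}B\|_{\tr}$ and, relatedly, that this value is the maximum of $\Re(\tr([PQ]^{\HH}B))$ over $Q\in\STM{k}{k}$ so that the substitution $P\leftarrow PQ$ yields the \emph{smallest} $\eta$ (hence the tightest bound) one can get while keeping the left-hand side of \eqref{eq:polar2max} fixed; this is where the cited facts from \cite{luli:2024,wazl:2023} enter. Once this is in place, both conclusions of the corollary drop out of \Cref{thm:polar2max}.
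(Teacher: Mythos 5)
Your proposal is correct and matches the paper's own (brief) argument in the paragraph preceding the corollary: substitute $P\leftarrow PQ$ with $Q$ the orthonormal polar factor of $P^{\HH}B$, note the invariance of $P(P^{\HH}B)$ and $\Theta(\cR(P),\cR(P_*))$ under this substitution, compute the new $\eta$ as $\|B\|_{\tr}-\|P^{\HH}B\|_{\tr}$, and invoke \Cref{thm:polar2max}(b) for the second bound using $[PQ]^{\HH}B\succ 0$. You fill in the bookkeeping details a bit more explicitly than the paper, but it is the same route.
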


\section{Concluding Remarks}\label{sec:conclu}

In this paper we investigated two matrix extremal problems concerning the eigenvalue problem of a Hermitian matrix and the
polar decomposition of a general matrix. Both problems are stated in terms of maximizing certain matrix trace functions.
The results have been used in \cite{li:2023arXiv} to analyze the convergence of two newly established frameworks
for numerical solutions of most common optimization problems on Stiefel manifolds. A notable difference is that
in \cite{li:2023arXiv} they are presented in the real number field whereas here it is for the complex number field.
Because of that, some subtleties arise. For example, we have to use $\Re(\cdot)$ to extract the real part of a complex number  at various places here.

\section*{Acknowledgment}
The research was supported in part by US NSF DMS-2407692.

{\small
\def\noopsort#1{}\def\l{\char32l}\def\v#1{{\accent20 #1}}
  \let\^^_=\v\def\hbk{hardback}\def\pbk{paperback}

}

\end{document}